\documentclass[10pt,a4paper]{amsart}
\pdfoutput=1
\parindent=0pt
\parskip=5pt
\usepackage{amsmath,amsthm,amstext,amssymb,a4,stmaryrd}
\usepackage[T1]{fontenc}

\newcommand{\R}{\ensuremath{\mathbb{R}}}

\newcommand{\del}{{\partial}}

\theoremstyle{plain}
\newtheorem{Thm}{Theorem}[]
\newtheorem{lemma}{Lemma}[]

\theoremstyle{remark}

\theoremstyle{definition}
\newtheorem{Def}{Definition}[]

\pagestyle{plain}

\begin{document}

\author[]{A. Dirmeier}
\address{Department of Mathematics, Technische Universit{\"a}t Berlin,
Str.~d.~17.~Juni 136, 10623 Berlin, Germany}

\title[]{Causal Classification of Conformally Flat Lorentzian Cylinders}

\begin{abstract}
We derive all possible causality conditions for conformally flat Lorentzian metrics on the two-dimensional cylinder.
\end{abstract}

\maketitle

Let $Z=S^1\times \R$ be the two-dimensional cylinder furnished with natural global coordinates $x\sim x+1\in S^1$ and $y\in\R$. A non-degenerate metric $g$ on $Z$ with signature $(-,+)$ makes $(Z,g)$ a \textbf{Lorentzian cylinder manifold}. Then also $(Z,-g)$ is a Lorentzian cylinder manifold (see e.g~\cite[\S~3.4]{Beem1996}). The Lorentzian metric can be given---with respect to the natural coordinates---as $g=-Edx^2+2Fdxdy+Gdy^2$ with apt functions $E,F,G\colon Z\to\R$.

\begin{Def}
A Lorentzian cylinder manifold $(Z,g)$ such that there is a globally defined timelike vector field $V\colon Z\to TZ$ (i.e.~$g(V,V)<0$ on all of $Z$) will be called a \textbf{Lorentzian cylinder spacetime}.  
\end{Def}

The condition in the Definition above is called time orientability and is necessary do develop causality theory on the Lorentzian spacetime (see e.g.~\cite{Minguzzi2008}).

\begin{Def}
A Lorentzian cylinder spacetime $(Z,g)$ will be called \textbf{conformally flat} if there is a function $\Psi\colon Z\to\R$ and \textit{constants} $E,F,G$ such that the Lorentzian metric takes the form $g=e^{2\Psi}(-Edx^2+2Fdxdy+Gdy^2)$. With $h:=e^{-2\Psi}g$ which is a Lorentzian metric, too, we call the spacetime $(Z,h)$ a \textbf{flat Lorentzian cylinder}.
\end{Def}

As all causality conditions are conformally invariant we can perform all derivations concerning causality of a conformally flat Lorentzian metric by using the conformally transformed flat metric. A flat Lorentzian cylinder admits the following properties.

\begin{lemma}\label{lemm}
Let $(Z,g)$ be a flat Lorentzian cylinder then the following conditions hold:
\begin{itemize}
 \item[(i)] For all $p,q\in Z$ there is an isometry $\Phi\colon Z\to Z$, $\Phi(p)=q$ such that $(\Phi^{\ast}g)|_p=g|_q$.
 \item[(ii)] The Riemannian curvature tensor vanishes identically. 
 \item[(iii)] $(Z,-g)$ is a flat Lorentzian cylinder, too.
\end{itemize}
\end{lemma}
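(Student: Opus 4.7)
The key observation is that in the flat case the metric coefficients $E,F,G$ in the global coordinates $(x,y)$ are \emph{constants}, so $g$ is invariant under coordinate translations. All three statements then follow with little more than this remark.

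For (i), I would exhibit the isometry explicitly as a translation. The cylinder $Z=S^1\times\R$ admits $\Phi(x,y)=(x+a\bmod 1,\,y+b)$ as a well-defined diffeomorphism for any $(a,b)\in\R^2$. Since $\Phi_{\ast}\del_x=\del_x$ and $\Phi_{\ast}\del_y=\del_y$ and the metric coefficients do not depend on $(x,y)$, one has $\Phi^{\ast}g=g$ globally. Choosing $(a,b)$ so that $\Phi(p)=q$ yields the claim.

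For (ii), the plan is a direct computation: with constant $g_{ij}$ the Christoffel symbols $\Gamma^k_{ij}=\frac{1}{2}g^{k\ell}(\del_ig_{j\ell}+\del_jg_{i\ell}-\del_\ell g_{ij})$ all vanish, and hence so does the Riemann tensor.

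Part (iii) is where a little more care is required. First I would note that $-g=-(-E)\,dx^2+2(-F)\,dx\,dy+(-G)\,dy^2$, so $-g$ has the required flat coordinate form with new constants $E'=-E$, $F'=-F$, $G'=-G$. The signature is still $(-,+)$ because in two dimensions $\det(-g_{ij})=\det(g_{ij})<0$ and the eigenvalues merely flip sign. The only nontrivial point is time-orientability of $(Z,-g)$, i.e.~the existence of a globally defined $g$-spacelike vector field. Since $g$ has signature $(-,+)$, the constant quadratic form $(a,b)\mapsto -Ea^2+2Fab+Gb^2$ on $\R^2$ is indefinite and hence positive at some $(a_0,b_0)$; then $a_0\del_x+b_0\del_y$ is a constant-coefficient vector field on $Z$ which is $g$-spacelike at every point, and therefore $(-g)$-timelike. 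The time-orientability step in (iii) is the only place where anything more than the constancy of the coefficients is used, and is the part I would handle most carefully.
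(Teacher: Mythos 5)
Your proof is correct, and parts of it are genuinely more self-contained than the paper's. For (i) you and the paper do essentially the same thing: the paper observes that $\del_x$ and $\del_y$ are Killing fields and composes their flows, which is exactly your explicit translation $\Phi(x,y)=(x+a\bmod 1,\,y+b)$. For (ii) the paper invokes the two-dimensional identity $R_{abcd}=K(g_{ac}g_{db}-g_{ad}g_{cb})$ together with Brioschi's formula for the Gaussian curvature $K$, whereas you simply note that constant $g_{ij}$ forces all Christoffel symbols to vanish; your route is more elementary, works in any dimension, and avoids an external citation, at the cost of not recording the (occasionally useful) fact that $K=0$. For (iii) the paper takes the given $g$-timelike field $V$ and produces a nonvanishing $g$-orthogonal field $V^{\perp}$, which is then $(-g)$-timelike; you instead use the indefiniteness of the constant quadratic form $-Ea^2+2Fab+Gb^2$ to produce a constant-coefficient $g$-spacelike field directly. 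Both are valid; the paper's argument is the one that generalizes to non-flat Lorentzian surfaces (it only uses two-dimensionality and triviality of the relevant line bundle), while yours exploits flatness and has the advantage of being completely explicit and of also verifying, as the paper does not bother to, that $-g$ retains the required constant-coefficient form and Lorentzian signature. You were right to flag time-orientability as the only point in (iii) needing care.
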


\begin{proof}
(i) As the coefficients $E,F,G$ of the metric are constant it can be easily calculated that $\del_x$ and $\del_y$ are Killing vector fields, i.e.~$L_{\del_x}g=0$ and $L_{\del_y}g=0$. Moving $p$ to $q$ be a suitable combination of the flows of $\del_x$ and $\del_y$ gives the desired isometry. This is a standard result (see e.g.~\cite{Duggal1999}).

(ii) The Riemannian curvature tensor $R$ is given by $R_{abcd}=K(g_{ac}g_{db}-g_{ad}g_{cb})$ with $K$ being the Gaussian curvature. By Brioschi's formula (see e.g.~\cite[pp.~504-507]{Gray1997}) we conclude immediately that $K=0$ if $E,F,G$ are constant and thus $R=0$. 

(iii) Trivially $-g$ is flat, too. Let $V\colon Z\to TZ$ be the timelike vector field assuring the time orientation of $(Z,g)$. Take any non-vanishing vector field $V^{\perp}\colon Z\to TZ$ orthogonal to $V$ with respect to $g$. Then $V^{\perp}$ is everywhere spacelike with respect to $g$ and everywhere timelike with respect to $-g$.
\end{proof}

Item (ii) in the Lemma above justifies the term \textbf{flat} cylinder.

\begin{Thm}
Let $(Z,g)$ be a flat Lorentzian cylinder spacetime. Then the following is equivalent:
\begin{itemize}
 \item[(i)] The fundamental vector $\del_x|_p$ is lightlike with respect to $g$ at some point $p\in Z$. 
 \item[(ii)] The fundamental vector field $\del_x$ is lightlike with respect to $g$ on all of $Z$.
 \item[(iii)] The fundamental vector $\del_x|_p$ is lightlike with respect to $-g$ at some point $p\in Z$. 
 \item[(iv)] The fundamental vector field $\del_x$ is lightlike with respect to $-g$ on all of $Z$.
 \item[(v)] The spacetime $(Z,g)$ is chronological but non-causal.
 \item[(vi)] The spacetime $(Z,-g)$ is chronological but non-causal.
\end{itemize}
\end{Thm}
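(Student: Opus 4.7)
\textit{Plan.} The strategy is to reduce all six conditions to the single algebraic equation $E=0$. Because $E,F,G$ are constants, $g(\del_x,\del_x)=-E$ is the same number at every point of $Z$, so condition (i) forces $E=0$ and then $\del_x$ is lightlike everywhere; this yields (i)$\iff$(ii). The analogous identity $(-g)(\del_x,\del_x)=E$ gives (iii)$\iff$(iv), so all four conditions (i)--(iv) are equivalent to the single equation $E=0$.

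For (v)$\iff E=0$ I would pass to the universal covering $\pi\colon(\R^2,\tilde g)\to(Z,g)$. The lifted metric $\tilde g$ has the same constant coefficients, so a linear change of basis makes $(\R^2,\tilde g)$ isometric to standard two-dimensional Minkowski space $\R^{1,1}$, whose causal theory is classical: $\tilde p$ and $\tilde q$ are causally (resp.\ chronologically) related iff $\tilde g(\tilde q-\tilde p,\tilde q-\tilde p)\leq 0$ (resp.\ $<0$). The deck group is generated by $T\colon(x,y)\mapsto(x+1,y)$, so any closed causal or timelike curve in $Z$ lifts to a curve of the same type in $\R^2$ joining some $\tilde p$ to $T^n\tilde p=\tilde p+(n,0)$ for some $n\in\mathbb{Z}\setminus\{0\}$ (the case $n=0$ is excluded because Minkowski space has no non-trivial closed causal curves). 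Such a lift exists iff $\tilde g((n,0),(n,0))=-En^2\leq 0$ (resp.\ $<0$), i.e.\ iff $E\geq 0$ (resp.\ $E>0$). Hence $(Z,g)$ is chronological iff $E\leq 0$ and causal iff $E<0$, and ``chronological but non-causal'' is exactly $E=0$.

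Statement (vi) then follows by applying the same analysis to $(Z,-g)$, which is a flat Lorentzian cylinder by Lemma~\ref{lemm}(iii); its $dx^2$-coefficient in the standard form is $-E$, so (vi) is equivalent to $-E=0$, i.e.\ $E=0$, closing the chain of equivalences.

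The step I expect to require most care is the descent in the universal cover: one must check both that every closed causal curve in $Z$ genuinely lifts to a segment with nonzero $n$, and that every causal vector $(n,0)$ in $\R^2$ projects to a closed causal curve in $Z$. Both directions ultimately rest on $(\R^2,\tilde g)$ being globally Minkowskian---which follows from the constant coefficients (flatness, Lemma~\ref{lemm}(ii)) together with the simple connectivity of the cover.
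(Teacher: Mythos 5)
Your proposal is correct, but it follows a genuinely different route from the paper's. For the equivalence of (i)--(iv) the paper invokes the homogeneity of the flat cylinder (Lemma~\ref{lemm}(i), Killing fields of $\del_x$, $\del_y$) to promote the pointwise statement to a global one; your observation that $g(\del_x,\del_x)=-E$ is literally the same constant at every point is even more elementary and equally valid. For (v) and (vi) the paper argues directly on the cylinder: the integral curves of a lightlike $\del_x$ are closed null curves, and any closed causal or timelike curve must have vanishing $\del_y$-component of its tangent at some parameter value (since the $y$-coordinate returns to its starting value), which pins down the causal character of $\del_x$. You instead pass to the universal cover, identify $(\R^2,\tilde g)$ with Minkowski $\R^{1,1}$ via a linear change of basis, and read off causality from the classical fact that $\tilde q\in J^+(\tilde p)$ iff $\tilde q-\tilde p$ is causal, reducing everything to the sign of $\tilde g((n,0),(n,0))=-En^2$. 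Your covering-space argument costs a little more machinery but buys more: it yields the full trichotomy (chronological iff $E\leq 0$, causal iff $E<0$) in one stroke, so it simultaneously proves the companion theorem on totally vicious versus globally hyperbolic cylinders, and it cleanly sidesteps a small imprecision in the paper's ``$\Leftarrow$'' direction, where the closed causal curve furnished by non-causality is asserted to be lightlike rather than merely causal (your version only needs that the deck-translation vector $(n,0)$ is causal). The one point to state explicitly if you write this up is that the deck transformation $T$ preserves the lifted time orientation, so closed causal curves on $Z$ correspond exactly to consistently directed causal segments from $\tilde p$ to $T^n\tilde p$ with $n\neq 0$; you gesture at this but it deserves a sentence.
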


\begin{proof}
Trivially (ii)$\Rightarrow$(i) and (iv)$\Rightarrow$(iii). By items (i) and (iii) in Lemma \ref{lemm} then also (i)$\Rightarrow$(ii) and (iii)$\Rightarrow$(iv). Furthermore we have $g(\del_x,\del_x)=0=-g(\del_x,\del_x)$ such that (ii)$\Leftrightarrow$(iv). We now prove (ii)$\Leftrightarrow$(v). The reasoning for (iv)$\Leftrightarrow$(vi) is completely analogous. 

"$\Rightarrow$": Assume $\del_x$ is lightlike everywhere with respect to $g$. Then obviously the integral curves of $\del_x$ are closed and lightlike, such that $(Z,g)$ is non-causal. Let $c\colon [0,1]\to Z$ be any timelike curve. Then we have for apt functions $\alpha$ and $\beta$ that $\dot{c}=\alpha\del_x+\beta\del_y$. As $c$ is timelike, $\beta>0$ or $\beta<0$ on all of $c$. But this implies that $c$ can not be closed, hence $(Z,g)$ is chronological.

"$\Leftarrow$": Assume $(Z,g)$ is non-causal. Then there is a curve $c\colon [0,1]\to Z$, which is closed, $c(0)=c(1)$, and lightlike, $g(\dot{c},\dot{c})=0$. Thus there are apt functions $\alpha$ and $\beta$ such that $\dot{c}=\alpha\del_x+\beta\del_y$. But as $c$ is closed there is a $t_0\in[0,1]$ such that $\beta(c(t_0))=0$. This implies there is a point $p:=c(t_0)$ where $\del_x|_p$ is lightlike and hence it is lightlike on all of $Z$.  
\end{proof}

\begin{Thm}
Let $(Z,g)$ be a flat Lorentzian cylinder spacetime. Then the following is equivalent:
\begin{itemize}
 \item[(i)] The fundamental vector $\del_x|_p$ is timelike with respect to $g$ at some point $p\in Z$. 
 \item[(ii)] The fundamental vector field $\del_x$ is timelike with respect to $g$ on all of $Z$. 
 \item[(iii)] The fundamental vector $\del_x|_p$ is spacelike with respect to $-g$ at some point $p\in Z$. 
 \item[(iv)] The fundamental vector field $\del_x$ is spacelike with respect to $-g$ on all of $Z$.
 \item[(v)] The spacetime $(Z,g)$ is totally vicious.
 \item[(vi)] The spacetime $(Z,-g)$ is globally hyperbolic.
\end{itemize}
\end{Thm}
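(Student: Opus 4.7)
The plan is first to dispose of the equivalences among (i)--(iv) exactly as in the previous theorem. Since $E$ is a constant, the functions $g(\del_x,\del_x)=-E$ and $-g(\del_x,\del_x)=E$ are constant on all of $Z$, so the causal character of $\del_x$ is the same at every point; this gives (i)$\Leftrightarrow$(ii) and (iii)$\Leftrightarrow$(iv) at once, while (ii)$\Leftrightarrow$(iv) follows because both conditions amount to the single inequality $E>0$.

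Next I would prove (ii)$\Leftrightarrow$(v). For ``$\Rightarrow$'', if $\del_x$ is timelike everywhere then through any $p=(x_0,y_0)\in Z$ the integral curve $t\mapsto(x_0+t\bmod 1,\,y_0)$ is a closed timelike curve, so $(Z,g)$ is totally vicious by definition. For ``$\Leftarrow$'', I would take a closed timelike curve $c\colon[0,1]\to Z$, decompose $\dot c=\alpha\del_x+\beta\del_y$, and observe that because the $y$-coordinate is not periodic, $\int_0^1\beta\,dt=0$, so $\beta$ vanishes at some $t_0$; there $\dot c=\alpha(t_0)\del_x$ is a non-zero timelike vector, hence $\del_x|_{c(t_0)}$ is timelike, and the already-established (i)$\Leftrightarrow$(ii) extends this to every point.

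For (iv)$\Leftrightarrow$(vi), the direction (vi)$\Rightarrow$(iv) is handled by contrapositive: if $\del_x$ is not spacelike w.r.t.~$-g$, then it is either lightlike (in which case $(Z,-g)$ is non-causal by the previous theorem) or timelike (in which case its integral curves are closed timelike curves for $(Z,-g)$, violating chronology); either failure rules out global hyperbolicity. The substantial step is (iv)$\Rightarrow$(vi), where I would show that each circle $\Sigma_{y_0}:=S^1\times\{y_0\}$ is a Cauchy hypersurface of $(Z,-g)$. Writing a $(-g)$-causal vector as $v=\alpha\del_x+\beta\del_y$, the inequality $E\alpha^2-2F\alpha\beta-G\beta^2\le 0$ combined with $E>0$ forces $\beta\neq 0$ and confines the slope $\alpha/\beta$ to the compact interval $\bigl[(F-\sqrt{F^2+EG})/E,\,(F+\sqrt{F^2+EG})/E\bigr]$ bounded by the two null directions (the discriminant is positive because $g$ is Lorentzian). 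Hence along any causal curve $y$ is strictly monotone and $x$ depends Lipschitz-continuously on $y$. The main obstacle will be to conclude that an inextendible causal curve has $y$-range equal to all of $\R$: a bounded $y$-range would, via the Lipschitz estimate, yield a continuous extension of the curve to the closure of its parameter interval, which can then be prolonged locally as a causal curve, contradicting inextendibility. Once this is established, $\Sigma_{y_0}$ meets every inextendible causal curve in exactly one point and $(Z,-g)$ is globally hyperbolic.
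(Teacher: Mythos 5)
Your handling of (i)--(iv), of (ii)$\Leftrightarrow$(v), and of (vi)$\Rightarrow$(iv) matches the paper's proof in substance (the paper derives the pointwise-to-global step from the homogeneity statement in Lemma \ref{lemm}(i) rather than from the constancy of $E$, but these are the same fact; your observation that $\int_0^1\beta\,dt=0$ for a closed curve is just a cleaner way of saying the paper's ``otherwise $c$ could not be closed''). The genuine divergence is in (iv)$\Rightarrow$(vi). The paper establishes causality of $(Z,-g)$ exactly as you do, and then verifies global hyperbolicity via the compactness of the causal diamonds: it shows $J^+(p)\cap J^-(q)\subset [y_0,y_1]\times S^1$ and concludes compactness ``as subsets of compact sets.'' You instead exhibit an explicit Cauchy hypersurface $\Sigma_{y_0}=S^1\times\{y_0\}$, using the uniform bound on the slope $\alpha/\beta$ of $(-g)$-causal vectors to show that $y$ is strictly monotone and $x$ is Lipschitz in $y$ along causal curves, whence every inextendible causal curve sweeps out all of $\R$ in $y$ and meets $\Sigma_{y_0}$ exactly once. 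Your route is somewhat longer but arguably more self-contained: the paper's argument implicitly needs the diamonds to be \emph{closed} subsets of $[y_0,y_1]\times S^1$ (a subset of a compact set is compact only if closed), and it relies on the characterization of global hyperbolicity as causality plus compact diamonds; your Cauchy-surface argument sidesteps both points, at the cost of the (standard but not entirely free) step that a causal curve with bounded $y$-range acquires an endpoint and is therefore extendible. Both proofs are correct; yours also yields the slightly stronger explicit conclusion that every circle $S^1\times\{y_0\}$ is a Cauchy surface.
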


\begin{proof}
Trivially (ii)$\Rightarrow$(i) and (iv)$\Rightarrow$(iii). By items (i) and (iii) in Lemma \ref{lemm} then also (i)$\Rightarrow$(ii) and (iii)$\Rightarrow$(iv). Furthermore we have $g(\del_x,\del_x)<0 \Leftrightarrow -g(\del_x,\del_x)>0$ such that (ii)$\Leftrightarrow$(iv). 

(ii)$\Rightarrow$(v): Assume $\del_x$ is timelike with respect to $g$ then obviously the integral curves of $\del_x$ are closed and timelike. As an integral curve of $\del_x$ passes through every $p\in Z$ the spacetime is totally vicious.

(v)$\Rightarrow$(i): Assume $(Z,g)$ is totally vicious. Then there is a curve $c\colon [0,1]\to Z$, which is closed, $c(0)=c(1)$, and timelike. We choose apt functions $\alpha$ and $\beta$ such that $\dot{c}=\alpha\del_x+\beta\del_y$. As $c$ is closed there is a $t_0\in[0,1]$ such that $\beta(c(t_0))=0$ because otherwise $c$ could not be closed. Hence $\del_x|_p$ is timelike for $p:=c(t_0)$. 

(iv)$\Rightarrow$(vi): By reasoning analogue to above $\del_x$ spacelike, implies $(Z,-g)$ causal, because a closed lightlike curve would have a lightlike tangent vector parallel to $\del_x$ at some point. For two arbitrary points $p=(x_0,y_0)\in Z$ and $q=(x_1,y_1)\in Z$ we will now show that $J^+(p)\cap J^-(q)\subset [y_0,y_1]\times S^1$. Hence the diamonds are compact as subsets of compact sets. We assume without loss of generality that $y_0<y_1$ and the future direction is given by growing $y$. For any causal curve $c\colon [0,1]\to Z$ with $c(0)=p$ and $c\subset J^+(p)$ we must obviously have $\dot{c}=\alpha\del_x+\beta\del_y$ with $\beta >0$ for apt functions $\alpha$ and $\beta$. Hence $J^+(p)\subset [y_0,\infty)\times S^1$ and an analogue reasoning for $q$ implies $J^-(q)\subset (-\infty,y_1]\times S^1$, which proves the assertion.  

(vi)$\Rightarrow$(iv): Assume $(Z,-g)$ is globally hyperbolic. If $\del_x$ was non-spacelike its integral curves would be closed causal curves and hence $(Z,-g)$ would be non-causal.
\end{proof}

These results show that there are only three possible steps for conformally flat Lorentzian cylinder spacetimes on the causal ladder. Either the spacetime is totally vicious or it is chronological but non-causal or it is globally hyperbolic. Furthermore the three steps are complementary with respect to the change of the sign of the metric. Hence for a pair of conformally flat cylinder spacetimes $(Z,\pm g)$ it either holds that both $(Z,g)$ and $(Z,-g)$ are non-causal but chronological or one is totally vicious and the other one is globally hyperbolic.

\end{document}